\begin{document}

\allowdisplaybreaks

\newif\ifdraft 
\drafttrue
\draftfalse
\newcommand{\DRAFTNUMBER}{1}
\newcommand{\DATE}{\today\ \ifdraft(Draft \DRAFTNUMBER)\fi}
\newcommand{\TITLE}{An algebraic approach to certain\\
              cases of Thurston rigidity}
\newcommand{\TITLERUNNING}{An algebraic approach to Thurston rigidity}

\hyphenation{ca-non-i-cal archi-me-dean non-archi-me-dean}


\newtheorem{theorem}{Theorem}
\newtheorem{lemma}[theorem]{Lemma}
\newtheorem{conjecture}[theorem]{Conjecture}
\newtheorem{proposition}[theorem]{Proposition}
\newtheorem{corollary}[theorem]{Corollary}
\newtheorem*{claim}{Claim}

\theoremstyle{definition}
\newtheorem{question}{Question}
\renewcommand{\thequestion}{\Alph{question}} 
\newtheorem*{definition}{Definition}
\newtheorem{example}[theorem]{Example}
\newtheorem{remark}[theorem]{Remark}

\theoremstyle{remark}
\newtheorem*{acknowledgement}{Acknowledgements}



\newenvironment{notation}[0]{%
  \begin{list}%
    {}%
    {\setlength{\itemindent}{0pt}
     \setlength{\labelwidth}{4\parindent}
     \setlength{\labelsep}{\parindent}
     \setlength{\leftmargin}{5\parindent}
     \setlength{\itemsep}{0pt}
     }%
   }%
  {\end{list}}

\newenvironment{parts}[0]{%
  \begin{list}{}%
    {\setlength{\itemindent}{0pt}
     \setlength{\labelwidth}{1.5\parindent}
     \setlength{\labelsep}{.5\parindent}
     \setlength{\leftmargin}{2\parindent}
     \setlength{\itemsep}{0pt}
     }%
   }%
  {\end{list}}
\newcommand{\Part}[1]{\item[\upshape#1]}

\renewcommand{\a}{\alpha}
\renewcommand{\b}{\beta}
\newcommand{\g}{\gamma}
\renewcommand{\d}{\delta}
\newcommand{\e}{\epsilon}
\newcommand{\f}{\varphi}
\newcommand{\bfphi}{{\boldsymbol{\f}}}
\renewcommand{\l}{\lambda}
\renewcommand{\k}{\kappa}
\newcommand{\lhat}{\hat\lambda}
\newcommand{\m}{\mu}
\newcommand{\bfmu}{{\boldsymbol{\mu}}}
\renewcommand{\o}{\omega}
\renewcommand{\r}{\rho}
\newcommand{\rbar}{{\bar\rho}}
\newcommand{\s}{\sigma}
\newcommand{\sbar}{{\bar\sigma}}
\renewcommand{\t}{\tau}
\newcommand{\z}{\zeta}

\newcommand{\D}{\Delta}
\newcommand{\G}{\Gamma}
\newcommand{\F}{\Phi}
\renewcommand{\L}{\Lambda}

\newcommand{\ga}{{\mathfrak{a}}}
\newcommand{\gA}{{\mathfrak{A}}}
\newcommand{\gb}{{\mathfrak{b}}}
\newcommand{\gm}{{\mathfrak{m}}}
\newcommand{\gn}{{\mathfrak{n}}}
\newcommand{\go}{{\mathfrak{o}}}
\newcommand{\gO}{{\mathfrak{O}}}
\newcommand{\gp}{{\mathfrak{p}}}
\newcommand{\gP}{{\mathfrak{P}}}
\newcommand{\gq}{{\mathfrak{q}}}
\newcommand{\gR}{{\mathfrak{R}}}

\newcommand{\Abar}{{\bar A}}
\newcommand{\Ebar}{{\bar E}}
\newcommand{\Kbar}{{\bar K}}
\newcommand{\Pbar}{{\bar P}}
\newcommand{\Sbar}{{\bar S}}
\newcommand{\Tbar}{{\bar T}}
\newcommand{\ybar}{{\bar y}}
\newcommand{\phibar}{{\bar\f}}

\newcommand{\Acal}{{\mathcal A}}
\newcommand{\Bcal}{{\mathcal B}}
\newcommand{\Ccal}{{\mathcal C}}
\newcommand{\Dcal}{{\mathcal D}}
\newcommand{\Ecal}{{\mathcal E}}
\newcommand{\Fcal}{{\mathcal F}}
\newcommand{\Gcal}{{\mathcal G}}
\newcommand{\Hcal}{{\mathcal H}}
\newcommand{\Ical}{{\mathcal I}}
\newcommand{\Jcal}{{\mathcal J}}
\newcommand{\Kcal}{{\mathcal K}}
\newcommand{\Lcal}{{\mathcal L}}
\newcommand{\Mcal}{{\mathcal M}}
\newcommand{\Ncal}{{\mathcal N}}
\newcommand{\Ocal}{{\mathcal O}}
\newcommand{\Pcal}{{\mathcal P}}
\newcommand{\Qcal}{{\mathcal Q}}
\newcommand{\Rcal}{{\mathcal R}}
\newcommand{\Scal}{{\mathcal S}}
\newcommand{\Tcal}{{\mathcal T}}
\newcommand{\Ucal}{{\mathcal U}}
\newcommand{\Vcal}{{\mathcal V}}
\newcommand{\Wcal}{{\mathcal W}}
\newcommand{\Xcal}{{\mathcal X}}
\newcommand{\Ycal}{{\mathcal Y}}
\newcommand{\Zcal}{{\mathcal Z}}

\renewcommand{\AA}{\mathbb{A}}
\newcommand{\BB}{\mathbb{B}}
\newcommand{\CC}{\mathbb{C}}
\newcommand{\FF}{\mathbb{F}}
\newcommand{\GG}{\mathbb{G}}
\newcommand{\NN}{\mathbb{N}}
\newcommand{\PP}{\mathbb{P}}
\newcommand{\QQ}{\mathbb{Q}}
\newcommand{\RR}{\mathbb{R}}
\newcommand{\ZZ}{\mathbb{Z}}

\newcommand{\bfa}{{\mathbf a}}
\newcommand{\bfb}{{\mathbf b}}
\newcommand{\bfc}{{\mathbf c}}
\newcommand{\bfe}{{\mathbf e}}
\newcommand{\bff}{{\mathbf f}}
\newcommand{\bfg}{{\mathbf g}}
\newcommand{\bfp}{{\mathbf p}}
\newcommand{\bfr}{{\mathbf r}}
\newcommand{\bfs}{{\mathbf s}}
\newcommand{\bft}{{\mathbf t}}
\newcommand{\bfu}{{\mathbf u}}
\newcommand{\bfv}{{\mathbf v}}
\newcommand{\bfw}{{\mathbf w}}
\newcommand{\bfx}{{\mathbf x}}
\newcommand{\bfy}{{\mathbf y}}
\newcommand{\bfz}{{\mathbf z}}
\newcommand{\bfA}{{\mathbf A}}
\newcommand{\bfF}{{\mathbf F}}
\newcommand{\bfB}{{\mathbf B}}
\newcommand{\bfD}{{\mathbf D}}
\newcommand{\bfG}{{\mathbf G}}
\newcommand{\bfI}{{\mathbf I}}
\newcommand{\bfM}{{\mathbf M}}
\newcommand{\bfzero}{{\boldsymbol{0}}}

\newcommand{\Adele}{\textsf{\upshape A}}
\newcommand{\Aut}{\operatorname{Aut}}
\newcommand{\Br}{\operatorname{Br}}  
\newcommand{\Crit}{\operatorname{Crit}} 
\newcommand{\crit}{{\textup{crit}}}
\newcommand{\Disc}{\operatorname{Disc}}
\newcommand{\Div}{\operatorname{Div}}
\newcommand{\End}{\operatorname{End}}
\newcommand{\Fbar}{{\bar{F}}}
\newcommand{\Fix}{\operatorname{Fix}}
\newcommand{\FOD}{\textup{FOM}}
\newcommand{\FOM}{\textup{FOD}}
\newcommand{\FilledJulia}{\mathcal{K}}
\newcommand{\Gal}{\operatorname{Gal}}
\newcommand{\ghat}{{\hat g}}
\newcommand{\GL}{\operatorname{GL}}
\newcommand{\Index}{\operatorname{Index}}
\newcommand{\Image}{\operatorname{Image}}
\newcommand{\hhat}{{\hat h}}
\newcommand{\Julia}{\mathcal{J}}
\newcommand{\Lattes}{{\operatorname{Lat}}}
\newcommand{\liftable}{{\textup{liftable}}}
\newcommand{\Ksep}{K^{\textup{sep}}}
\newcommand{\Ker}{{\operatorname{ker}}}
\newcommand{\Lsep}{L^{\textup{sep}}}
\newcommand{\Lift}{\operatorname{Lift}}
\newcommand{\LS}[2]{{\genfrac{(}{)}{}{}{#1}{#2}}} 
\newcommand{\vlim}{\operatornamewithlimits{\text{$v$}-lim}}
\newcommand{\wlim}{\operatornamewithlimits{\text{$w$}-lim}}
\newcommand{\MOD}[1]{~(\textup{mod}~#1)}
\newcommand{\Moduli}{{\operatorname{\textup{\textsf{M}}}}}
\newcommand{\Norm}{{\operatorname{\mathsf{N}}}}
\newcommand{\notdivide}{\nmid}
\newcommand{\normalsubgroup}{\triangleleft}
\newcommand{\odd}{{\operatorname{odd}}}
\newcommand{\onto}{\twoheadrightarrow}
\newcommand{\Orbit}{\mathcal{O}}
\newcommand{\ord}{\operatorname{ord}}
\newcommand{\Per}{\operatorname{Per}}
\newcommand{\PrePer}{\operatorname{PrePer}}
\newcommand{\PGL}{\operatorname{PGL}}
\newcommand{\Pic}{\operatorname{Pic}}
\newcommand{\Prob}{\operatorname{Prob}}
\newcommand{\Qbar}{{\bar{\QQ}}}
\newcommand{\rank}{\operatorname{rank}}
\newcommand{\Rat}{\operatorname{Rat}}
\newcommand{\Resultant}{\operatorname{Res}}
\renewcommand{\setminus}{\smallsetminus}
\newcommand{\Span}{\operatorname{Span}}
\newcommand{\tors}{{\textup{tors}}}
\newcommand{\Trace}{\operatorname{Trace}}
\newcommand{\twistedtimes}{\mathbin{%
   \mbox{$\vrule height 6pt depth0pt width.5pt\hspace{-2.2pt}\times$}}}
\newcommand{\UHP}{{\mathfrak{h}}}    
\newcommand{\Wreath}{\operatorname{Wreath}}
\newcommand{\wt}{\operatorname{wt}} 
\newcommand{\<}{\langle}
\renewcommand{\>}{\rangle}

\newcommand{\longhookrightarrow}{\lhook\joinrel\longrightarrow}
\newcommand{\longonto}{\relbar\joinrel\twoheadrightarrow}


\title[\TITLERUNNING]{\TITLE}
\date{\DATE}

\author{Joseph H. Silverman}
\email{jhs@math.brown.edu}
\address{Mathematics Department, Box 1917
         Brown University, Providence, RI 02912 USA}

\subjclass[2010]{Primary: 37F10; Secondary:  37P05 37P45}
\thanks{
     The author's research is supported by 
     NSF DMS-0650017 and DMS-0854755.
}

\begin{abstract}
In the moduli space of polynomials of degree~$3$ with marked critical
points~$c_1$ and~$c_2$, let~$C_{1,n}$ be the locus of maps for
which~$c_1$ has period~$n$ and let~$C_{2,m}$ be the locus of maps for
which~$c_2$ has period~$m$. A consequence of Thurston's rigidity
theorem is that the curves~$C_{1,n}$ and~$C_{2,m}$ intersect
transversally.  We give a purely algebraic proof that the intersection
points are~$3$-adically integral and use this to prove transversality.
We also prove an analogous result when~$c_1$ or~$c_2$ or both are
taken to be preperiodic with tail length exactly~$1$.
\end{abstract}


\maketitle

\section{Introduction}

The moduli space~$\Pcal_d$ of polynomials of degree~$d\ge2$ is the space
of polynomials modulo conjugation by the affine action $z\mapsto\a
z+\b$.  Working over~$\CC$ and choosing appropriate values for~$\a$
and~$\b$, every polynomial can be put into the form
\[
  f(z) = z^d + a_2z^{d-2}+\cdots+a_d,
\]
so~$\Pcal_d\cong\CC^{d-1}$. The polynomial~$f$ has~$d-1$ critical
points (counted with multiplicity), and we write~$\Pcal_d^\crit$ for
the moduli space of polynomials~$f$ with marked critical
point~$(c_1,\ldots,c_{d-1})$. Imposing natural relations on these
critical points gives subvarieties of~$\Pcal_d^\crit$, and an important
consequence of Thurston's rigidity theorem~\cite{MR1251582} is that in many
cases these subvarieties have transversal intersection. For example,
transversality holds if we require~$c_1,\ldots,c_{d-1}$ to be periodic
with respective periods~$n_1,\ldots,n_{d-1}$, or more generally if
they are preperiodic with specified tail lengths and
periods. Thurston's theorem also gives analogous results for rational
functions.

The proof of Thurston's general theorem is quite difficult and
requires deep tools; see~\cite{MR1251582}. Adam Epstein has asked if
one might prove at least some cases of Thurston rigidity using
$p$-adic and/or algebraic methods. In this note we give an algebraic
$3$-adic proof of the following special case of Thurston
rigidity for cubic polynomials.

\begin{theorem}
\label{thm:cubicrigid}
Let $\Pcal_3^\crit$ be the moduli space of polynomials of degree~$3$
with marked critical points, i.e., points in $\Pcal_3^\crit$ are
equivalence classes of triples $(f,c_1,c_2)$, where~$f\in\CC[z]$ is a
polynomial of degree~$3$ and~$c_1$ and~$c_2$ are the critical points
of~$f$.
\par
For integers $n,m\ge1$, let
\begin{align*}
  C_{1,n} &= \bigl\{(f,c_1,c_2)\in\Pcal_3^\crit :
       \text{$c_1$ is periodic with $f^n(c_1)=c_1$} \bigr\}, \\
  C_{2,m} &= \bigl\{(f,c_1,c_2)\in\Pcal_3^\crit :
       \text{$c_2$ is periodic with $f^m(c_2)=c_2$} \bigr\}.
\end{align*}
Then $C_{1.n}$ and $C_{2,m}$ intersect transversally at all of their
points of intersection.
\end{theorem}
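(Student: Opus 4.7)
I would parametrize $\Pcal_3^\crit$ by affine coordinates $(c_1, b)$ using the normal form
\[
 f(z) = z^3 - 3c_1^2 z + b,
\]
whose critical points are $c_1$ and $c_2 = -c_1$. In these coordinates the loci $C_{1,n}$ and $C_{2,m}$ are cut out by
\[
 P_n(c_1, b) := f^n(c_1) - c_1 \qquad\text{and}\qquad Q_m(c_1, b) := f^m(-c_1) + c_1,
\]
respectively, both in $\ZZ[c_1, b]$. Transversality at an intersection point is equivalent to non-vanishing of the Jacobian determinant $\D(c_1, b) := \det\bigl(\partial(P_n, Q_m)/\partial(c_1, b)\bigr)$. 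My plan is to prove the stronger statement that $\D$ is a $3$-adic unit at every intersection point. This will follow from two facts: (i) the intersection points are $3$-adically integral, so that reduction modulo $3$ is meaningful, and (ii) the reduction of $\D$ modulo $3$ is a nonzero constant, independent of the point.

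For (ii), reducing $f$ modulo $3$ gives $f(z) \equiv z^3 + b$, and induction produces $f^n(z) \equiv z^{3^n} + S_n \pmod{3}$ with $S_n := \sum_{i=0}^{n-1} b^{3^i}$. Hence $P_n \equiv c_1^{3^n} - c_1 + S_n$ and $Q_m \equiv -c_1^{3^m} + c_1 + S_m$ modulo $3$. All monomials $c_1^{3^k}$ and $b^{3^i}$ with $k, i \ge 1$ contribute partial derivatives divisible by $3$, so only the linear terms survive, giving
\[
 \frac{\partial (P_n, Q_m)}{\partial (c_1, b)} \equiv \begin{pmatrix} -1 & 1 \\ 1 & 1 \end{pmatrix} \pmod{3},
\]
which yields $\D \equiv -2 \pmod{3}$, a nonzero element of $\FF_3$ \emph{independently of the intersection point} and of the integers $n, m$.

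For (i), I would use a $3$-adic escape-at-infinity argument. Fix an embedding of the intersection point into $\Qbar_3$ and let $v$ denote the extension of the $3$-adic valuation normalized by $v(3) = 1$. Suppose toward a contradiction that $v(c_1) < 0$; comparing $v(b)$ against $3v(c_1)$ shows that for at least one of $c_1, -c_1$ the cubic term in $f(y) = y^3 - 3c_1^2 y + b$ dominates from the first iterate onward, so the valuation of $f^i(\pm c_1)$ triples each step and tends to $-\infty$, contradicting $f^n(c_1) = c_1$ or $f^m(-c_1) = -c_1$. The only delicate subcase is $v(b) = 3v(c_1)$, where the leading term of $\pm 2 c_1^3 + b$ can cancel; but $b$ cannot simultaneously satisfy $b \equiv 2c_1^3$ and $b \equiv -2c_1^3$ modulo a uniformizer since $4$ is a unit at $3$, so the cancellation occurs for at most one of $c_1, -c_1$ and the other orbit still escapes. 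An analogous argument, now assuming $v(c_1) \ge 0$, rules out $v(b) < 0$. Combining (i) and (ii), $\D$ lies in $\overline{\ZZ_3}$ and reduces to $-2 \neq 0$ in $\FF_3$, so $\D \neq 0$ at every intersection point, proving transversality. The main obstacle is the integrality assertion (i): while the valuation arithmetic in each subcase is elementary, it is precisely the presence of \emph{two} marked critical points that supplies the symmetry-breaking needed to eliminate the cancellation subcase, which is why this algebraic approach is tailored to the cubic setting.
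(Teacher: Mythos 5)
Your proposal is correct in outline, and its skeleton---the normal form $z^3-3c_1^2z+b$, the reduction of the Jacobian to $\det\left(\begin{smallmatrix}-1&1\\1&1\end{smallmatrix}\right)\equiv1\pmod 3$, and the reduction of transversality to $3$-adic integrality of the intersection points---coincides exactly with the paper's. Where you genuinely diverge is in the integrality step, which is the hard step. The paper proves it by pure elimination theory: a weighted degree bound on the coefficients of $f^n_{x,y}(x)$ yields $\deg_y\Resultant_x(F^{(n)},G^{(m)})\le 3^{n+m-1}$ via the Sylvester matrix, and the closed-form Artin--Schreier resultant $\Resultant(x^{3^n}-x-A,\,x^{3^m}-x-B)$ over $\FF_3$ shows the bound is attained mod~$3$, so the resultant has leading coefficient prime to~$3$ and its roots are $3$-adically integral. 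Your route instead tracks the valuations $v\bigl(f^i(\pm c_1)\bigr)$ and shows that non-integrality forces at least one critical orbit to escape $3$-adically; this is in substance Epstein's argument (arXiv:1010.2780), which the paper explicitly cites as the independently discovered ``metric/dynamical'' alternative to its ``algebraic'' one, and your symmetry-breaking observation in the subcase $v(b)=3v(c_1)$ (that $b$ cannot cancel both $2c_1^3$ and $-2c_1^3$ since $v(4)=0$) is precisely the crux there. Two details to tighten: in the subcase $v(b)<3v(c_1)$ it is the constant term, not the cubic term, that dominates the first iterate (the orbit still escapes, but the induction should carry the invariant $v(w)<\min\bigl(0,\,v(b)/3,\,v(c_1)+\tfrac12\bigr)$ rather than ``the cubic term dominates from the first iterate''); and an arbitrary complex intersection point need not be algebraic a priori, so you should embed $\QQ(c_1,b)$ into the completion of $\overline{\QQ_3}$ rather than into $\overline{\QQ}_3$, or first rule out a common component of the two curves---something the paper gets for free from the nonvanishing of the resultant. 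In exchange for its longer computation, the paper's route also delivers the exact degree $3^{n+m-1}$ of the resultant, hence finiteness of the intersection and a point count, which your argument does not provide.
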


Our proof of Theorem~\ref{thm:cubicrigid} may be compared with the
analogous $2$-adic proof for quadratic polynomials that is due
independently to Adler and Gleason; see~\cite[Lemma~19.1]{DouHubNotes}
and~\cite{MR1620850}, and also \cite[Appendix]{arxiv1010.2780} for a
generalization.  Our proof of Theorem~\ref{thm:cubicrigid} may also be
compared with the recent, independently discovered, $3$-adic proof by
Epstein~\cite{arxiv1010.2780}.  (We note that Epstein's paper contains
results stronger than our Theorem~\ref{thm:cubicrigid}. The primary
purpose of our paper is to provide a conceptually different proof.)
Both our proof and Epstein's proof deduce the final conclusion, namely
that a certain Jacobian determinant is non-zero, by showing that it
does not vanish modulo~$3$.  The most difficult part of the proof is
to show that the critical points of suitably normalized
post-critically finite cubic polynomials are $3$-adically integral,
and this is where the two proofs differ. Epstein's proof uses a
detailed analysis of the sequence of $3$-adic valuations
$\ord_3\bigl(f^n(c)\bigr)$ of the points in the forward orbit of a
critical point. Thus it makes extensive use of a ($p$-adic) metric
and has a dynamical flavor. Our proof uses an
estimate for the degrees of the curves~$C_{1,n}$ and~$C_{2,m}$,
followed by a resultant calculation, so is much more algebraic in
nature. We mention in particular the interesting explicit formula
(Lemma~\ref{lemma:resxpnxpm}) for the resultant
\[
  \Resultant(x^{p^n}-x-A,x^{p^m}-x-B) \in \FF_p[A,B].
\]
This formula is used to show (Theorem~\ref{prop:ResFnGm3int}) that a
certain resultant has maximal degree by showing that it has maximal
degree when reduced modulo~$3$.

Thurston's theorem deals also with the case that the critical points
are preperiodic, i.e., have finite orbits. Using an algebraic trick,
we are able to give an algebraic proof of this result for cubic
polynomials in the case that the critical points have tail length at
most~$1$.  We give the exact statement and proof in
Section~\ref{section:preperiodic}. It would be quite interesting to
extend this result to allow arbitrary preperiodic behavior.

\section{Proof of Thurston Rigidity for Cubic Polynomials}
In this section we give the proof of Theorem~\ref{thm:cubicrigid}.
Making a change of variables, we may assume that our cubic
polynomials have the form
\[
  f_{x,y}(z)= z^3 - 3x^2z + y
\]
with marked critical points $\pm x$.  For the given integers
$n,m\ge1$, we let
\begin{equation}
  \label{eqn:defFG}
  F^{(n)}(x,y) = f_{x,y}^n(x)-x
  \quad\text{and}\quad
  G^{(m)}(x,y) = f_{x,y}^m(-x)+x.
\end{equation}
Then the solutions to
\begin{equation}
  \label{eqn:F=G=0}
  F^{(n)}(x,y)=G^{(m)}(x,y)=0
\end{equation}
are exactly the pairs $(\a,\b)$ with the property that the critical
points of~$f_{\a,\b}(z)$ have period~$n$ and~$m$, respectively.
\par
Let $(\a,\b)\in\CC$ be a solution to~\eqref{eqn:F=G=0}. The curves~$F^{(n)}=0$
and~$G^{(m)}=0$ are \emph{transversal} at~$(\a,\b)$ if 
and only if the Jacobian determinant does not vanish, i.e., 
\[
  \det\begin{pmatrix} 
    F^{(n)}_x(\a,\b) & G^{(m)}_x(\a,\b) \\
    F^{(n)}_y(\a,\b) & G^{(m)}_y(\a,\b) \\
   \end{pmatrix} \ne 0.
\]
\par
In general, the Jacobian determinant is the polynomial
\begin{equation}
  \label{eqn:Jxy}
  J(x,y) = 
  \det\begin{pmatrix} 
    F^{(n)}_x(x,y) & G^{(m)}_x(x,y) \\
    F^{(n)}_y(x,y) & G^{(m)}_y(x,y) \\
   \end{pmatrix}\in\ZZ[x,y].
\end{equation}
Then the curves~$F^{(n)}=0$ and~$G^{(m)}=0$ intersect transversally at
all of their intersection points if and only if the ideal
\[
  \bigl(F^{(n)}(x,y),G^{(m)}(x,y),J(x,y)\bigr) \subset \CC[x,y]
\]
is the unit ideal. 
\par 
We will prove that $(F^{(n)},G^{(m)},J)=(1)$ by proving the following two
assertions.
\begin{itemize}
\setlength{\itemsep}{0pt}
\item
All solutions~$(\a,\b)$ to $F^{(n)}=G^{(m)}=0$ are
$3$-adically integral.
\item
$J(x,y)\equiv 1\pmod3$.
\end{itemize}

\begin{remark}
Our proof of Theorem~\ref{thm:cubicrigid}, \emph{mutatis mutandis},
can be used to show the following more general result.  Let~$p\ge3$ be
prime and let
\[
  f_{x,y}(z)=z^p-px^{p-1}z-y.
\]
The critical points of~$f_{x,y}$ are the points $\z x$,
where $\z\in\bfmu_{p-1}$. Let $\z_1$ and $\z_2$ be distinct
$(p-1)^{\text{st}}$-roots of unity. Fix integers $n,m\ge1$. Then the curves
\[
  f_{x,y}^n(\z_1 x)=\z_1 x\qquad\text{and}\qquad f_{x,y}^m(\z_2 x)=\z_2 x
\]
intersect transversally.
\end{remark}

We begin with a lemma that describes the iterates of~$f_{x,y}(z)$ evaluated
at $z=x$.

\begin{lemma}
\label{lemma:fxyiterates}
Let
\[
  f_{x,y}(z)=z^3-3x^2z+y.
\]
Then
\begin{equation}
  \label{eqn:fminusx}
  f^n_{x,y}(z) = f^n_{-x,y}(z),
\end{equation}
The iterates of~$f_{x,y}$ evaluated at~$x$ have the following
properties\textup:
\begin{parts}
\Part{(a)}
As a polynomial in~$x$,
\begin{equation}
  \label{eqn:fnxyxexpand}
  f^n_{x,y}(x)= \sum_{k=0}^{3^n} a_k(y)x^{3^n-k}
       \in \ZZ[y][x]
\end{equation}
with
\begin{align}
  \deg a_k(y) &\le 4\left\lfloor\frac{k}{3}\right\rfloor-k
    \label{eqn:aij1}\\
   a_0(y) &= (-2)^{3^{n-1}}\equiv 1 \pmod3, 
    \label{eqn:aij2}\\
   a_{3^n}(y) &= y^{3^{n-1}} + \textup{(lower order terms)}.
    \label{eqn:aij3}
\end{align}
\textup(By convention, a polynomial with negative degree is the zero
polynomial.\textup)
\Part{(b)}
Reducing modulo~$3$, we have
\[
  f^n_{x,y}(x) \equiv  x^{3^n} + y+y^3+y^9+\cdots+y^{3^{n-1}} \pmod{3}.
\]
\Part{(c)}
For $n,m\ge1$, define
\[
  F^{(n)}(x,y)=f^n_{x,y}(x)-x
  \qquad\text{and}\qquad
  G^{(m)}(x,y)=f^m_{x,y}(-x)+x.
\]
Then~$F^{(n)}(x,y)\in\ZZ[x,y]$ and~$G^{(m)}(x,y)\in\ZZ[x,y]$.
Further
\[
  G^{(m)}(x,y) = F^{(m)}(-x,y),
\]
and
\begin{align*}
  F^{(n)}(x,y) &\equiv x^{3^n} - x + y+y^3+y^9+\cdots+y^{3^{n-1}} \pmod{3},\\
  G^{(m)}(x,y) &\equiv -x^{3^m} + x + y+y^3+y^9+\cdots+y^{3^{m-1}} \pmod{3}.
\end{align*}
\end{parts}
\end{lemma}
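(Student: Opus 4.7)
The plan is to prove equation \eqref{eqn:fminusx} first as a one-line observation, and then establish (a), (b), (c) by induction on $n$ using the recursion
\[
  f_{x,y}^{n+1}(x) = \bigl(f_{x,y}^n(x)\bigr)^3 - 3x^2 f_{x,y}^n(x) + y.
\]
Equation \eqref{eqn:fminusx} is immediate since $f_{x,y}(z)=z^3-3x^2z+y$ depends on $x$ only through $x^2$, so all iterates do as well.

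For part (a), the base case $n=1$ is direct: $f_{x,y}(x)=-2x^3+y$, giving $a_0=-2$, $a_1=a_2=0$, and $a_3=y$, consistent with the claimed bounds. For the inductive step I would expand the cube $\bigl(\sum_k a_k(y)x^{3^n-k}\bigr)^3$ as a triple convolution and bound the $y$-degree of the coefficient of $x^{3^{n+1}-k'}$ using the elementary inequality
\[
  \lfloor k_1/3\rfloor+\lfloor k_2/3\rfloor+\lfloor k_3/3\rfloor \le \bigl\lfloor (k_1+k_2+k_3)/3\bigr\rfloor,
\]
which yields $\sum_i(4\lfloor k_i/3\rfloor-k_i)\le 4\lfloor k'/3\rfloor-k'$. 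The contribution of $-3x^2f_{x,y}^n(x)$ sits at index $k''=2\cdot 3^n+k-2$, and one checks directly that the bound $4\lfloor k/3\rfloor-k$ on the $y$-degree of the old $a_k(y)$ fits inside $4\lfloor k''/3\rfloor-k''$ whenever $n\ge1$; the $+y$ summand only perturbs the constant-in-$x$ coefficient and is harmless. For the leading coefficient, only the cube contributes, giving $a_0\mapsto a_0^3=\bigl((-2)^{3^{n-1}}\bigr)^3=(-2)^{3^n}\equiv 1\pmod 3$. For the constant-in-$x$ coefficient $a_{3^{n+1}}(y)$, the $-3x^2f_{x,y}^n(x)$ term contributes nothing, and the cube gives $a_{3^n}(y)^3=(y^{3^{n-1}}+\textup{l.o.t.})^3=y^{3^n}+\textup{l.o.t.}$, which together with the harmless $+y$ preserves the claim.

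Part (b) follows most cleanly by a separate short induction, using only $f_{x,y}(z)\equiv z^3+y\pmod 3$ together with the Frobenius identity $(A+B)^3\equiv A^3+B^3\pmod 3$: cubing the inductive hypothesis $f_{x,y}^{n-1}(x)\equiv x^{3^{n-1}}+\sum_{j=0}^{n-2}y^{3^j}$ and adding $y$ immediately gives the claim for $n$. For part (c), the integrality of $F^{(n)}$ and $G^{(m)}$ is immediate from (a), the identity $G^{(m)}(x,y)=F^{(m)}(-x,y)$ is a one-line computation using \eqref{eqn:fminusx}, and the mod-$3$ congruences follow by substituting into (b) and using $(-x)^{3^m}=-x^{3^m}$.

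The main obstacle is the bookkeeping in part (a): verifying that all three contributions to $f_{x,y}^{n+1}(x)$ (the cube, the $-3x^2$ multiplication, and the additive $+y$) respect the floor-type degree bound $\deg a_k(y)\le 4\lfloor k/3\rfloor-k$ under iteration. Everything else is either a definitional observation or a routine Frobenius-style induction.
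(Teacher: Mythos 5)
Your proof is correct and follows essentially the same route as the paper: induction on $n$ via the recursion $f^{n+1}_{x,y}(x)=\bigl(f^n_{x,y}(x)\bigr)^3-3x^2f^n_{x,y}(x)+y$, with the triple-convolution plus floor-inequality argument for the degree bound \eqref{eqn:aij1}, a Frobenius-style induction for (b), and the evenness of $f_{x,y}$ in $x$ for (c). The only cosmetic difference is that you obtain \eqref{eqn:aij2} and \eqref{eqn:aij3} from the coefficient recursions $a_0\mapsto a_0^3$ and $a_{3^n}\mapsto a_{3^n}^3+y$, whereas the paper uses weight/grading arguments (weighting $x,z$ for the leading coefficient and $y$ for the constant-in-$x$ coefficient); both are valid, and you additionally spell out the check on the $-3x^2f^n_{x,y}(x)$ term that the paper dismisses as easy.
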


\begin{remark}
The upper bound in the right-hand side of~\eqref{eqn:aij1} has the form
\[
  \begin{array}{|c*{5}{||r|r|r}|} \hline
    k & 0 & 1 & 2 & 3 & 4 & 5 & 6 & 7 & 8 & 9 & 10 & 11 & 12 & \dots\\ \hline
    4\lfloor k/3\rfloor-k &
      0 & -1 & -2 & 1 & 0 & -1 & 2 & 1 & 0 & 
      3 & 2 & 1 & 4 & \dots \\ \hline
  \end{array}
\]
Experimentally, it seems that the polynomials~$a_k(y)$ appearing in
the expansion~\eqref{eqn:fnxyxexpand} satisfy
\[
  \deg a_k(y) = 4\lfloor k/3\rfloor-k
  \quad\text{for all $k$ except $k=3^n-1$,}
\]
and $a_{3^n-1}(y)=0$. It would probably not be hard to prove
this by induction.
\end{remark}

\begin{proof}
It is clear that we can write~$f_{x,y}^n(x)$ in the
form~\eqref{eqn:fnxyxexpand} for some polynomials~$a_k(y)\in\ZZ[y]$,
so it remains to prove that these polynomials
satisfy~\eqref{eqn:aij1},~\eqref{eqn:aij2} and~\eqref{eqn:aij3}.  We
begin with the proof of~\eqref{eqn:aij1}, which is by induction
on~$n$.  To indicate the dependence on~$n$, we write $a_k^{(n)}(y)$.
For $n=1$ we have
\[
  f_{x,y}^1(x) = z^3-3x^2z+y\big|_{z\to x}=-2x^3+y,
\]
so
\[
  a_{0}^{(1)}(y)=-2,\quad a_{1}^{(1)}(y)=a_{2}^{(1)}(y)=0,\quad a_{3}^{(1)}(y)=y.
\]
\par
Next we assume that~\eqref{eqn:aij1} is true for~$n$ and we compute
\begin{align}
  \label{eqn:akinduct}
  f_{xy}^{n+1}(x)
  &= f_{xy}\bigl(f^n_{xy}(x)\bigr) \notag\\
  &= f^n_{xy}(x)^3 -3x^2f^n_{xy}(x) + y \notag\\
  &= \left(\sum_{k=0}^{3^n} a^{(n)}_k(y)x^{3^n-k}\right)^3
    -3x^2\left(\sum_{k=0}^{3^n} a^{(n)}_k(y)x^{3^n-k}\right) + y.
\end{align}
We consider first the cubed expression in~\eqref{eqn:akinduct}.
If it is multiplied out, we obtain a sum of terms of the form
\[
  a^{(n)}_i(y)x^{3^n-i}a^{(n)}_j(y)x^{3^n-j}a^{(n)}_k(y)x^{3^n-k} 
  = a^{(n)}_i(y)a^{(n)}_j(y)a^{(n)}_k(y)x^{3^{n+1}-i-j-k} 
\]
with $0\le i,j,k\le 3^n$.  Applying~\eqref{eqn:aij1} to
$a^{(n)}_i(y)$,~$a^{(n)}_j(y)$, and~$a^{(n)}_k(y)$, we find that
\begin{align*}
  \deg\bigl(a^{(n)}_i(y)a^{(n)}_j(y)a^{(n)}_k(y)\bigr)
  &\le 4\left\lfloor\frac{i}{3}\right\rfloor - i
   +  4\left\lfloor\frac{j}{3}\right\rfloor - j
   +  4\left\lfloor\frac{k}{3}\right\rfloor - k \\
  &\le 4\left\lfloor\frac{i+j+k}{3}\right\rfloor - i-j-k,
\end{align*}
where the last line follows from the elementary 
inequality (see Section~\ref{section:elemineq})
\begin{equation}
  \label{eqn:floorineq}
  \lfloor t_1 \rfloor + \lfloor t_2 \rfloor 
    + \lfloor t_3 \rfloor
  \le \lfloor t_1+t_2+t_3 \rfloor
  \quad\text{for all $t_1,t_2,t_3\in\RR$.}
\end{equation}
Thus terms coming from the cubed expression in~\eqref{eqn:akinduct}
satisfy~\eqref{eqn:aij1} for $n+1$. Since it
is easy to see that the other terms in~\eqref{eqn:akinduct}
satisfy~\eqref{eqn:aij1} for $n+1$, this completes the proof
by induction that~\eqref{eqn:aij1} holds for all~$n\ge1$.
\par
In order to prove~\eqref{eqn:aij2}, we observe that if we assign
weight~$1$ to both~$x$ and~$z$ and weight~$0$ to~$y$, then the terms
of weight~$3^n$ in~$f_{xy}^n(z)$ are precisely the ones that come from
repeatedly cubing the degree~$3$ expression~$z^3-3x^2z$, i.e.,
\[
  f^n_{xy}(z) = (z^3-3x^2z)^{3^{n-1}} + \textup{(lower weight terms)}.
\]
Hence
\begin{align*}
  a_{0}^{(n)}(y) 
  &= \textup{coefficient of $x^{3^n}$ in $f^n_{xy}(x)$}\\
  &= \textup{coefficient of $x^{3^n}$ in $(-2x^3)^{3^{n-1}}$}\\
  &= (-2)^{3^{n-1}}.
\end{align*}
\par
The proof of~\eqref{eqn:aij3} is a trivial induction on~$n$. More
precisely, if we let~$y$ have weight~$1$ and~$x$ and~$z$ have weight~$0$,
then
\begin{align*}
  f^{n+1}_{x,y}(x) 
  &= f^n_{x,y}(x)^3 - 3x^2 f^n_{x,y}(x) + y \\
  &= (y^{3^{n-1}}+\textup{(lower weight terms)})^3  \\
   &\qquad {} - 3x^2(y^{3^{n-1}} + \textup{(lower weight terms)}) + y \\
  &= y^{3^{n}}+\textup{(lower weight terms)}.
\end{align*}
This completes the proof of~(a).
\par
For~(b) we are working modulo~$3$, so
\[
  f_{x,y}(z) \equiv z^3 + y \pmod{3}.
\]
An easy induction gives the desired result,
\begin{align*}
  f^{n+1}_{x,y}(x) 
  &\equiv f\bigl(f^n_{x,y}(x)\bigr) \pmod3\\
  &\equiv f^n_{x,y}(x)^3 + y \pmod3\\
  &\equiv  (x^{3^n}+y+y^3+y^9+\cdots+y^{3^{n-1}})^3 + y \pmod3\\
  &\equiv x^{3^{n+1}}+y^3+y^9+y^{27}+\cdots+y^{3^{n}} + y \pmod3.
\end{align*}
\par
To prove the first part of~(c), we evaluate~\eqref{eqn:fminusx}
at~$z=-x$ to obtain $f^n_{x,y}(-x)=f^n_{-x,y}(-x)$. Substituting this
into the definition of~$G^{(n)}(x,y)$ yields
\[
  G^{(n)}(x,y) = f^n_{x,y}(-x)+x
  = f^n_{-x,y}(-x)+x
  = F^{(n)}(-x,y).
\]
Finally, the values of~$F^{(n)}$ and~$G^{(n)}$ modulo~$3$ follow from
the value of~$f_{x,y}^n(x)$ modulo~$3$ computed in~(b).
\end{proof}

An immediate consequence of Lemma~\ref{lemma:fxyiterates} is the
mod~$3$ value of the Jacobian.

\begin{proposition}
\label{prop:Jxy=1mod3}
The Jacobian determinant $J(x,y)\in\ZZ[x,y]$ defined
by~\eqref{eqn:Jxy} satisfies
\[
  J(x,y) \equiv 1 \pmod{3}.
\]
\end{proposition}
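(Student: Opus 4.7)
The plan is to compute the four entries of the Jacobian matrix modulo~$3$ directly from the mod~$3$ expressions for $F^{(n)}$ and $G^{(m)}$ supplied by Lemma~\ref{lemma:fxyiterates}(c), and then evaluate the resulting $2\times2$ determinant. Since every ingredient is already in hand, this is essentially a bookkeeping computation; no obstacle of substance is expected.

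Concretely, I would start from the congruences
\begin{align*}
  F^{(n)}(x,y) &\equiv x^{3^n} - x + y + y^3 + y^9 + \cdots + y^{3^{n-1}} \pmod 3,\\
  G^{(m)}(x,y) &\equiv -x^{3^m} + x + y + y^3 + y^9 + \cdots + y^{3^{m-1}} \pmod 3,
\end{align*}
and differentiate termwise. For the $x$-partials, every power of $x$ appearing on the right has exponent divisible by~$3$ except the linear term $\mp x$, so formal differentiation kills all the $x^{3^k}$ contributions modulo~$3$ and leaves $F^{(n)}_x \equiv -1$ and $G^{(m)}_x \equiv 1 \pmod 3$. For the $y$-partials, each monomial $y^{3^k}$ with $k\ge 1$ has exponent divisible by~$3$ and so differentiates to~$0$ modulo~$3$, while the linear term $y$ contributes~$1$; hence $F^{(n)}_y \equiv G^{(m)}_y \equiv 1 \pmod 3$.

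Substituting these four congruences into~\eqref{eqn:Jxy} gives
\[
  J(x,y) \equiv \det\begin{pmatrix} -1 & 1 \\ 1 & 1 \end{pmatrix} \equiv -2 \equiv 1 \pmod 3,
\]
which is the desired statement. The only thing to check carefully is the routine observation that $3^k \equiv 0 \pmod 3$ for $k\ge 1$, so that all the higher-order terms in both variables vanish upon differentiation; this is what makes the matrix become constant modulo~$3$ and yields a nonzero determinant. No further input is required.
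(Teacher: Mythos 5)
Your proposal is correct and follows essentially the same route as the paper: differentiate the mod~$3$ formulas for $F^{(n)}$ and $G^{(m)}$ from Lemma~\ref{lemma:fxyiterates}(c), note that all exponents other than the linear terms are divisible by~$3$, and evaluate the constant $2\times2$ determinant. The only implicit point, which you use correctly, is that formal differentiation over $\ZZ[x,y]$ commutes with reduction modulo~$3$, so the partials of $F^{(n)}$ and $G^{(m)}$ are congruent to the partials of their mod~$3$ reductions.
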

\begin{proof}
Differentiating the formulas for~$F^{(n)}(x,y)$ and~$G^{(m)}(x,y)$ in
Lemma~\ref{lemma:fxyiterates}(c) and reducing modulo~$3$ yields
\[
  J(x,y)
  =   \det\begin{pmatrix} 
    F^{(n)}_x(x,y) & G^{(m)}_x(x,y) \\
    F^{(n)}_y(x,y) & G^{(m)}_y(x,y) \\
   \end{pmatrix}
  \equiv \det\begin{pmatrix} 
      -1 & 1 \\ 1 & 1 \\
   \end{pmatrix}
  \equiv 1 \pmod 3.
\]
\end{proof}

Before tackling the $3$-integrality of the common roots
of~$F^{(n)}(x,y)$ and $G^{(m)}(x,y)$, we prove two elementary lemmas.
With an eye towards generalizations, we work over~$\FF_p$.

\begin{lemma}
\label{lemma:prodTuv}
Let $p$ be a prime, let $m,n\ge1$ be integers, let $d=\gcd(m,n)$, and
let~$\tau$ denote the $p$-power Frobenius map. Then
\begin{equation}
  \label{eqn:prodTuvtdtntmtd}
  \prod_{u\in\FF_{p^n}} \prod_{v\in\FF_{p^m}} (T-u-v)
  = \frac{\tau^d\circ(\tau^n-1)\circ(\tau^m-1)}{\tau^d-1}(T)\in\FF_p[T].
\end{equation}
\end{lemma}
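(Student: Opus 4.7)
My plan is to identify both sides of~\eqref{eqn:prodTuvtdtntmtd} as $\FF_p$-additive polynomials and compare them inside the ring $\FF_p[\tau]$ of $\FF_p$-linear operators generated by the $p$-power Frobenius~$\tau$. Because $\tau$ fixes $\FF_p$ pointwise, $\FF_p[\tau]$ is commutative, so the divisibility $\tau^d-1\mid(\tau^n-1)(\tau^m-1)$ (which follows from $d\mid n$ and $d\mid m$) is unambiguous and the right-hand side of~\eqref{eqn:prodTuvtdtntmtd} is a well-defined monic additive polynomial of degree~$p^{n+m}$ in~$T$, matching the degree and leading coefficient of the left-hand side.

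The first substantive step is a multiplicity count for the left-hand side. Because $\FF_{p^n}\cap\FF_{p^m}=\FF_{p^d}$, each element $w$ of the $\FF_p$-subspace $V:=\FF_{p^n}+\FF_{p^m}\subseteq\bar{\FF}_p$ has exactly $p^d$ representations $w=u+v$ with $u\in\FF_{p^n}$ and $v\in\FF_{p^m}$, and hence $|V|=p^{n+m-d}$. It follows that
\[
  \prod_{u\in\FF_{p^n}}\prod_{v\in\FF_{p^m}}(T-u-v)=P(T)^{p^d},
  \qquad P(T):=\prod_{w\in V}(T-w),
\]
and $P(T)$ is a monic separable $\FF_p$-additive polynomial of degree $p^{n+m-d}$ with kernel exactly~$V$ (a standard fact: the monic polynomial with simple roots given by a finite $\FF_p$-subspace of $\bar{\FF}_p$ is automatically additive).

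The technical heart of the argument is the identification of $P(T)$ with the additive polynomial $L(T)$ attached to $L:=(\tau^n-1)(\tau^m-1)/(\tau^d-1)\in\FF_p[\tau]$. The factorization $\tau^n-1=(\tau^d-1)(1+\tau^d+\cdots+\tau^{n-d})$ (and its analogue for~$m$) shows that $L$ is a multiple of both $\tau^n-1$ and $\tau^m-1$, so $\ker L(T)\supseteq\FF_{p^n}+\FF_{p^m}=V$. Comparing $\tau^0$-coefficients in the identity $L\cdot(\tau^d-1)=\tau^{n+m}-\tau^n-\tau^m+1$ shows that the constant term of $L$ equals~$-1$, so the coefficient of $T$ in $L(T)$ is nonzero and $L(T)$ is separable; combined with $\deg L(T)=p^{n+m-d}=|V|$, this forces $\ker L(T)=V$ and hence $L(T)=P(T)$.

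To finish, the Frobenius identity $P(T)^{p^d}=P(T^{p^d})$, valid because $P\in\FF_p[T]$, yields
\[
  \prod_{u,v}(T-u-v)=P(T)^{p^d}=L(T^{p^d})=(\tau^d\circ L)(T),
\]
which is the right-hand side of~\eqref{eqn:prodTuvtdtntmtd}. I expect the identification $P=L$ to be the main obstacle, and within it the essential point is the one-line separability check for $L(T)$ via the constant-term computation in $\FF_p[\tau]$; the rest is bookkeeping about degrees and the commutativity of $\FF_p[\tau]$.
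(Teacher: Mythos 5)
Your proof is correct and follows essentially the same route as the paper: both reduce the product to $P(T)^{p^d}$ via the multiplicity count coming from $\FF_{p^n}\cap\FF_{p^m}=\FF_{p^d}$, identify $P(T)$ with the additive polynomial attached to $(\tau^n-1)(\tau^m-1)/(\tau^d-1)$, and then apply $\tau^d$. The only (immaterial) difference is in the identification step: the paper checks that $\psi$ vanishes on the simple roots of the monic $\f$ of equal degree and concludes $\f\mid\psi$, whereas you verify separability of $L(T)$ via its $\tau^0$-coefficient and match kernels by counting --- both are valid one-line arguments.
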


\begin{remark}
The meaning of the right-hand side of~\eqref{eqn:prodTuvtdtntmtd} is
as follows. The rational expression
$\frac{\tau^d\circ(\tau^n-1)\circ(\tau^m-1)}{\tau^d-1}$ is actually a
polynomial in~$\tau$, since~$d$ divides~$m$. In other words, it is an
element of~$\ZZ[\tau]$. We then use the natural action of~$\ZZ[\tau]$
on~$\FF_p[T]$ defined by
\[
  \left(\sum a_i\tau^i\right)\bigl(f(T)\bigr)
  = \sum a_if(T)^{p^i}.
\]
\end{remark}

\begin{proof}
We first observe that if $u_1,u_2\in\FF_{p^n}$ and $v_1,v_2\in\FF_{p^m}$
satisfy
\[
  u_1+v_1 = u_2+v_2,
\]
then
\[
  u_1-u_2 = v_1-v_2 \in \FF_{p^n}\cap\FF_{p^m} = \FF_{p^d}.
\]
Hence
\[
  \prod_{u\in\FF_{p^n}} \prod_{v\in\FF_{p^m}} (T-u-v)
  = \biggl(\prod_{w\in(\FF_{p^n}+\FF_{p^m})/\FF_{p^d}} (T-w)\biggr)^{p^d}.
\]
Let
\[
  \f(T)=\prod_{w\in(\FF_{p^n}+\FF_{p^m})/\FF_{p^d}} (T-w)
  \quad\text{and}\quad
  \psi(T)=\frac{(\tau^n-1)\circ(\tau^m-1)}{\tau^d-1}(T).
\]
Our earlier observation shows that~$\f(T)$ has distinct roots, and it
is monic of degree~$p^{n+m-d}$. 
\par
We next observe that for any $u\in\FF_{p^n}$ and $v\in\FF_{p^m}$,
we have
\begin{align*}
  \psi(u+v) 
  &= \frac{(\tau^n-1)\circ(\tau^m-1)}{\tau^d-1}(u+v) \\
  &= \left(\frac{\tau^m-1}{\tau^d-1}\right)\circ(\tau^n-1)(u)
      + \left(\frac{\tau^n-1}{\tau^d-1}\right)\circ(\tau^m-1)(v) \\
  &= 0,
\end{align*}
since $\tau^n(u)=u$ and $\tau^m(v)=v$. Thus~$\psi(T)$ vanishes at each of
the roots of~$\f(T)$, and~$\f(T)$ has simple roots, 
so~\text{$\f(T)\mid\psi(T)$}. But~$\psi(T)$ is monic
and has the same degree~$p^{n+m-d}$ as~$\f(T)$. Hence~$\psi(T)=\f(T)$,
and therefore
\begin{align*}
  \prod_{u\in\FF_{p^n}} \prod_{v\in\FF_{p^m}} (T-u-v)
  &= \f(T)^{p^d} = \psi(T)^{p^d} = \tau^d\bigl(\psi(T)\bigr) \\
  &= \frac{\tau^d\circ(\tau^n-1)\circ(\tau^m-1)}{\tau^d-1}(T).
\end{align*}
This completes the proof of Lemma~\ref{lemma:prodTuv}.
\end{proof}

\begin{lemma}
\label{lemma:resxpnxpm}
Let $p$ be a prime, let $m,n\ge1$ be integers, let $d=\gcd(m,n)$, and
let~$\tau$ denote the $p$-power Frobenius map. Then working
in~$\FF_p[A,B]$, we have
\[
  \Resultant(x^{p^n}-x-A,x^{p^m}-x-B)
  = \frac{\tau^d\circ(\tau^m-1)}{\tau^d-1}(A)
    -  \frac{\tau^d\circ(\tau^n-1)}{\tau^d-1}(B).
\]
\end{lemma}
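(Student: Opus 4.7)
My plan is to evaluate the resultant as a double product over the roots of the two polynomials and then invoke Lemma~\ref{lemma:prodTuv} directly. To set this up, I first work in an algebraic closure $K$ of $\FF_p(A,B)$ and fix $\alpha_0, \beta_0 \in K$ with $\alpha_0^{p^n}-\alpha_0 = A$ and $\beta_0^{p^m}-\beta_0 = B$. Because $x^{p^n}-x-A$ has derivative $-1$ it is separable, and any two roots of $x^{p^n}-x-A$ differ by an element of the additive kernel of $\tau^n-1$, namely $\FF_{p^n}$. Hence the two root sets are exactly
\[
  \{\alpha_0 + u : u \in \FF_{p^n}\}
  \quad\text{and}\quad
  \{\beta_0 + v : v \in \FF_{p^m}\}.
\]

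Next I will write the resultant (of two monic polynomials) as $\prod_{\alpha,\beta}(\alpha-\beta)$, substitute these parametrizations, and set $T = \alpha_0 - \beta_0$. Replacing the dummy variable $u$ by $-u$ (which permutes $\FF_{p^n}$) converts the product into the exact form appearing on the left side of Lemma~\ref{lemma:prodTuv}, so
\[
  \Resultant(x^{p^n}-x-A,\ x^{p^m}-x-B)
  = \prod_{u\in\FF_{p^n}}\prod_{v\in\FF_{p^m}}(T-u-v)
  = \frac{\tau^d\circ(\tau^n-1)\circ(\tau^m-1)}{\tau^d-1}(T).
\]

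Finally, I will exploit the fact that the displayed operator is a genuine element of $\ZZ[\tau]$ (since $d \mid m$ makes $(\tau^m-1)/(\tau^d-1)$ equal to the polynomial $1+\tau^d+\tau^{2d}+\cdots+\tau^{m-d}$), so it acts $\FF_p$-additively on $K$. Applying it to $T = \alpha_0 - \beta_0$ and using that the factors commute in $\ZZ[\tau]$, I split
\[
  \frac{\tau^d(\tau^m-1)}{\tau^d-1}\bigl((\tau^n-1)(\alpha_0)\bigr)
  - \frac{\tau^d(\tau^n-1)}{\tau^d-1}\bigl((\tau^m-1)(\beta_0)\bigr),
\]
and the identities $(\tau^n-1)(\alpha_0) = A$ and $(\tau^m-1)(\beta_0) = B$ then yield exactly the claimed formula.

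The substantive content is Lemma~\ref{lemma:prodTuv}; the rest is bookkeeping in $\ZZ[\tau]$. The one subtlety worth flagging is the descent from $K$ back to $\FF_p[A,B]$: the intermediate element $T$ depends on the choices of $\alpha_0$ and $\beta_0$, but the final expression is manifestly in $\FF_p[A,B]$, and independence of the choice is automatic because replacing $\alpha_0$ by $\alpha_0+u_0$ with $u_0\in\FF_{p^n}$ adds $u_0^{p^n}-u_0 = 0$ to $(\tau^n-1)(\alpha_0)$, and similarly for $\beta_0$.
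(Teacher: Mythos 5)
Your proof is correct and follows essentially the same route as the paper's: parametrize the Artin--Schreier root sets as $\{\alpha+u\}$ and $\{\beta+v\}$, express the resultant as the double product $\prod_{u,v}\bigl((\alpha-\beta)-u-v\bigr)$, apply Lemma~\ref{lemma:prodTuv}, and then split the operator using $(\tau^n-1)(\alpha)=A$ and $(\tau^m-1)(\beta)=B$. Your explicit remarks on the sign change $u\mapsto -u$ and on independence of the choice of roots are fine touches the paper leaves implicit.
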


\begin{remark}
Lemma~\ref{lemma:resxpnxpm} uses Frobenius to give a compact
expression for the resultant, but we can also write it out explicitly as
\[
  \Resultant(x^{p^n}-x-A,x^{p^m}-x-B)
  = \sum_{i=1}^{m/d} A^{p^{id}} - \sum_{i=1}^{n/d} B^{p^{id}}.
\]
\end{remark}

\begin{proof}
Let~$\a,\b\in\overline{\FF_p(A,B)}$ be roots, respectively, of
\[
  x^{p^n}-x-A\quad\text{and}\quad x^{p^m}-x-B.
\]
The extensions $\bar\FF_p(\a)/\bar\FF_p(A)$ and
$\bar\FF_p(\b)/\bar\FF_p(B)$ are Artin--Scheier extensions. 
The conjugates of~$\a$ over~$\bar\FF_p(A)$ are
\[
  \bigl\{\a+u : u\in\FF_{p^n}\bigr\},
\]
and similarly for~$\b$, so we have factorizations
\[
  x^{p^n}-x-A=\prod_{u\in\FF_{p^n}} (x-\a-u)
  \quad\text{and}\quad 
  x^{p^m}-x-B=\prod_{v\in\FF_{p^m}} (x-\b-v).
\]
We now compute
\begin{align*}
  \Resultant&(x^{p^n}-x-A,x^{p^m}-x-B) \\
  &=  \prod_{u\in\FF_{p^n}} \prod_{v\in\FF_{p^m}} (\a+u-\b-v) 
    &&\text{see \cite[2.13(b)]{MR2316407},} \\
  &=  \prod_{u\in\FF_{p^n}} \prod_{v\in\FF_{p^m}} \bigl((\a-\b)-u-v) \\
  &= \frac{\tau^d\circ(\tau^n-1)\circ(\tau^m-1)}{\tau^d-1}(\a-\b) 
    &&\text{from Lemma \ref{lemma:prodTuv},} \\
  &= \frac{\tau^d\circ(\tau^m-1)}{\tau^d-1}(A) 
   -  \frac{\tau^d\circ(\tau^n-1)}{\tau^d-1}(B) 
    &&\begin{tabular}[t]{l}
      since $(\tau^n-1)(\a)=A$\\ and $(\tau^m-1)(\b)=B$.\\
      \end{tabular}
\end{align*}
This completes the proof of Lemma~\ref{lemma:resxpnxpm}.
\end{proof}

\begin{remark}
We observe that for $m=n$, Lemma~\ref{lemma:resxpnxpm} can be proven
directly from the Sylvester matrix. To ease notation, let $N=p^n$.
Then the Sylvester matrix for the resultant of $x^N-x-A$ and $x^N-x-B$
is the $2N$-by-$2N$ matrix
\begin{equation}
  S(A,B) = 
  \left[
  \begin{array}{*{10}c}
    1 & 0 & 0 & -1 & A \\
      & 1 & 0 & 0 & -1 & A \\
      &   & \ddots & & & & \ddots \\
      & & & 1 & 0 & 0 & -1 & A \\
    1 & 0 & 0 & -1 & B \\
      & 1 & 0 & 0 & -1 & B \\
      &   & \ddots & & & & \ddots \\
      & & & 1 & 0 & 0 & -1 & B \\
  \end{array}
  \right].
\end{equation}
If we subtract each row in the top half from the corresponding row in
the bottom half, we obtain an upper-triangular matrix whose diagonal
is $(1,1,\ldots,1,B-A,B-A,\ldots,B-A)$. Hence
\[
  \Resultant(x^{p^n}-x-A,x^{p^m}-x-B)
  = \det S(A,B) = (B-A)^N = B^{p^n} - A^{p^n}.
\]
\end{remark}

\begin{proposition}
\label{prop:ResFnGm3int}
Let $F^{(n)}(x,y)$ and $G^{(m)}(x,y)$ be as defined
by~\eqref{eqn:defFG}. Then
\[
  \Resultant_x\bigl(F^{(n)}(x,y),G^{(m)}(x,y)\bigr) \in \ZZ[y]
\]
is a polynomial of degree $3^{n+m-1}$ with integer coefficients and
leading coefficient relatively prime to~$3$.
\end{proposition}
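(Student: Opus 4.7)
The plan is to prove both assertions simultaneously by reducing modulo~$3$ and applying Lemma~\ref{lemma:resxpnxpm}. Writing $P(y)=y+y^3+\cdots+y^{3^{n-1}}$ and $Q(y)=y+y^3+\cdots+y^{3^{m-1}}$, Lemma~\ref{lemma:fxyiterates}(c) gives $F^{(n)}(x,y)\equiv x^{3^n}-x-(-P(y))$ and $G^{(m)}(x,y)\equiv -\bigl(x^{3^m}-x-Q(y)\bigr) \pmod{3}$. Pulling the constant $-1$ out of $G^{(m)}$ multiplies the resultant by $(-1)^{\deg_x F^{(n)}}=(-1)^{3^n}=-1$. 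Then applying Lemma~\ref{lemma:resxpnxpm} with $p=3$, $A=-P$, $B=Q$, $d=\gcd(n,m)$, and using that $3^{id}$ is odd so $(-P)^{3^{id}}=-P^{3^{id}}$, the two sign flips cancel and one obtains
\[
  \Resultant_x\bigl(F^{(n)},G^{(m)}\bigr)
  \equiv \sum_{i=1}^{m/d} P(y)^{3^{id}} + \sum_{i=1}^{n/d} Q(y)^{3^{id}} \pmod{3}.
\]

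The two summands of largest $y$-degree on the right are $P^{3^m}$ and $Q^{3^n}$, each of $y$-degree $3^{n-1}\cdot 3^m = 3^{m-1}\cdot 3^n = 3^{n+m-1}$ with leading coefficient $1$; every other summand has strictly smaller $y$-degree. Their contributions add to give leading coefficient $2\equiv -1\pmod{3}$ in degree $3^{n+m-1}$, so the reduction modulo~$3$ has $y$-degree exactly $3^{n+m-1}$ with nonzero leading coefficient. Consequently, $\Resultant_x(F^{(n)},G^{(m)})\in\ZZ[y]$ has $y$-degree at least $3^{n+m-1}$, and the coefficient of $y^{3^{n+m-1}}$ is an integer congruent to $-1\pmod{3}$, hence coprime to~$3$.

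To finish, I would establish the matching upper bound $\deg_y\Resultant_x(F^{(n)},G^{(m)})\le 3^{n+m-1}$ via a Newton polygon (or Puiseux) analysis of $F^{(n)}(x,y)$ regarded as a polynomial in~$x$ over $\CC(y)$. Lemma~\ref{lemma:fxyiterates}(a) gives the estimate $\deg_y a_k(y)\le 4\lfloor k/3\rfloor-k\le k/3$, while $\deg_x F^{(n)}=3^n$ with leading $x$-coefficient $a_0=(-2)^{3^{n-1}}$, a nonzero constant. Together these force the Newton polygon of $F^{(n)}$ at the valuation $v=-\deg_y$ to consist of a single edge of slope $1/3$, so every one of the $3^n$ roots $\alpha_i(y)\in\overline{\CC(y)}$ of $F^{(n)}(\cdot,y)$ grows as $O(y^{1/3})$ as $y\to\infty$. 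Substituting such a root into $G^{(m)}$ and applying the same bound to its $y$-coefficients yields $G^{(m)}(\alpha_i(y),y)=O(y^{3^{m-1}})$, and multiplying over the $3^n$ roots gives $\Resultant_x(F^{(n)},G^{(m)})=O(y^{3^n\cdot 3^{m-1}})=O(y^{3^{n+m-1}})$, as required.

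Combining the two bounds, the resultant has $y$-degree exactly $3^{n+m-1}$ and its leading coefficient reduces to $-1\pmod{3}$, hence is coprime to~$3$. The main obstacle is the upper-bound step: the mod-$3$ identity alone cannot rule out higher-degree terms with coefficients divisible by~$3$, so the Newton polygon estimate derived from Lemma~\ref{lemma:fxyiterates}(a) is what is really needed to pin the degree down to the value predicted by the modular formula.
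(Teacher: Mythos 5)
Your proof is correct, and its first half (the mod-$3$ lower bound) is exactly the paper's argument: reduce via Lemma~\ref{lemma:fxyiterates}(c), pull out the sign, apply Lemma~\ref{lemma:resxpnxpm} with $A=-P$, $B=Q$, and observe that the two top terms $P^{3^m}$ and $Q^{3^n}$ contribute $2y^{3^{n+m-1}}$. Where you diverge is the degree upper bound. The paper expands the Sylvester determinant $\det S$ term by term, bounds each permutation product using $\deg a_k\le 4\lfloor k/3\rfloor-k$ together with the floor inequality~\eqref{eqn:floorineq}, and telescopes the sum to $MN/3$. You instead work over $\CC(y)$ at $y=\infty$: the bound $\deg a_k\le k/3$ plus the constant nonzero leading coefficient $a_0=(-2)^{3^{n-1}}$ and $\deg a_{3^n}=3^{n-1}$ give a one-edge Newton polygon of slope $1/3$, so every root of $F^{(n)}(\cdot,y)$ is $O(y^{1/3})$, whence $G^{(m)}(\a_i,y)=O(y^{3^{m-1}})$ and the product over the $3^n$ roots is $O(y^{3^{n+m-1}})$. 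The two arguments rest on the same input from Lemma~\ref{lemma:fxyiterates}(a) --- indeed the paper's telescoped Sylvester bound is numerically identical to what the weaker estimate $\deg A_k\le k/3$ gives directly --- so yours is essentially a repackaging, but a conceptually cleaner one: it makes visible that the relevant structure is the weight $1/3$ assigned to $x$, and it avoids the combinatorics of permutation products. Two small points you should make explicit when writing it up: the adjustment $A_{3^n-1}=a_{3^n-1}-1$ coming from the extra $\mp x$ still satisfies $\deg A_k\le k/3$ since $k/3\ge 0$ (your bound handles this more gracefully than the paper's, whose bound is negative at $k=3^n-1$ when $n=1$); and the resultant-as-product formula carries the constant factor $a_0^{3^m}$, which does not affect the $y$-degree.
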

\begin{proof}
As in~\eqref{eqn:fnxyxexpand} of Lemma~\ref{lemma:fxyiterates}(a), we
write
\[
  f^n_{x,y}(x)= \sum_{k=0}^{3^n} a_k(y)x^{3^n-k}
       \in \ZZ[y][x]
\]
with  polynomials~$a_k(y)$
satisfying~\eqref{eqn:aij1},~\eqref{eqn:aij2},
and~\eqref{eqn:aij3}. We similarly write
\[
  f^m_{x,y}(-x)= \sum_{k=0}^{3^n} b_k(y)x^{3^m-k}
       \in \ZZ[y][x].
\]
(We adopt this notation as being less clumsy for the present proof
than our earlier notation, which would have been $a_k(y)=a_k^{(n)}(y)$
and $b_k(y)=(-1)^{i+1}a_k^{(m)}(y)$.) Then
\[
  F^{(n)}(x,y) = \sum_{k=0}^{3^n} a_k(y)x^{3^n-k}-x
  \quad\text{and}\quad
  G^{(m)}(x,y) = \sum_{k=0}^{3^m} b_k(y)x^{3^m-k}+x.
\]
In order to surpress the extra~$\pm x$ for the moment, we write
\[
  F^{(n)}(x,y) = \sum_{k=0}^{3^n} A_k(y)x^{3^n-k}
  \quad\text{and}\quad
  G^{(m)}(x,y) = \sum_{k=0}^{3^m} B_k(y)x^{3^m-k},
\]
where $A_k=a_k$ except $A_{3^n-1}=a_{3^n-1}-1$, and similarly for~$B_k$.
We observe that the degree estimates for~$a_k$ given
by~\eqref{eqn:aij1} are true for~$A_k$ and~$B_k$, since the extra~$\pm x$
is within the specified bound for the degree.
\par
To ease notation, we let
\[
  N=3^n\qquad\text{and}\qquad M=3^m.
\]
Then the $x$-resultant of~$F^{(n)}(x,y)$ and~$G^{(m)}(x,y)$ is given
by the determinant of the Sylvester matrix
\begin{equation*}
  \left[
  \begin{array}{*{10}c}
    A_0 & A_1 & A_2 & A_3 & \cdots & A_{N-1} & A_N \\
      & A_0 & A_1 & A_2 & A_3 & \cdots & A_{N-1} & A_N \\
      &   & \ddots & & & & & & \ddots \\
      & & & A_0 & A_1 & A_2 & A_3 & \cdots & A_{N-1} & A_N \\
    B_0 & B_1 & \cdots & B_{M-1} & B_M \\
      & B_0 & B_1 & \cdots & B_{M-1} & B_M \\
      &   & \ddots & & & & \ddots \\
      & & & B_0 & B_1 & \cdots & B_{M-1} & B_M \\
      & & & & B_0 & B_1 & \cdots & B_{M-1} & B_M \\
      & & & & & B_0 & B_1 & \cdots & B_{M-1} & B_M \\
  \end{array}
  \right].
\end{equation*}
The Sylvester matrix, which we denote by~$S$, is a square matrix of
size~$M+N$. Its top~$M$ rows have~$A_k$ coefficients and its
bottom~$N$ rows have~$B_k$ coefficients. When we entirely expand $\det
S$, it is a sum of terms of the form
\[
  (-1)^{\operatorname{sign}(\s)}\prod_{i=1}^{M+N} S_{i,\s(i)},
\]
where~$\s$ is a permutation of~$\{1,2,\ldots,M+N\}$.  We are
interested in bounding the degree of this term, so we assume that all
of the $S_{i,\s(i)}$ are nonzero and compute
\begin{align*}
  \deg\biggl(\prod_{i=1}^{M+N} S_{i,\s(i)}\biggr)
  &= \sum_{i=1}^M \deg(S_{i,\s(i)}) + \sum_{i=M+1}^{M+N} \deg(S_{i,\s(i)}) \\
  &= \sum_{i=1}^M \deg(A_{\s(i)-i}) + \sum_{i=M+1}^{M+N} \deg(B_{\s(i)-(i-M)})\\
  &= \sum_{i=1}^M \deg(A_{\s(i)-i}) + \sum_{i=1}^{N} \deg(B_{\s(i+M)-i}).
\end{align*}
We now apply the bound~\eqref{eqn:aij1} from
Lemma~\ref{lemma:fxyiterates}(a), which as we noted earlier applies
to~$A_k$ and~$B_k$. This yields
\begin{align*}
  \deg\biggl(\prod_{i=1}^{M+N} S_{i,\s(i)}\biggr)
  &\le \sum_{i=1}^M \left(4\left\lfloor\frac{\s(i)-i}{3}\right\rfloor
             -(\s(i)-i)\right) \\
  &\qquad{}
    +  \sum_{i=1}^N \left(4\left\lfloor\frac{\s(i+M)-i}{3}\right\rfloor
             -(\s(i+M)-i)\right).
\end{align*}
We rewrite this last expression using fractional part notation,
\[
  \{t\}=t-\lfloor t\rfloor,
\]
to obtain
\begin{align*}
  \deg\biggl(\prod_{i=1}^{M+N} S_{i,\s(i)}\biggr)
  &\le \sum_{i=1}^M \left(\frac{\s(i)-i}{3}
        -4\left\{\frac{\s(i)-i}{3}\right\}\right) \\
  &\qquad{}
    +  \sum_{i=1}^N \left(\frac{\s(i+M)-i}{3}
        -4\left\{\frac{\s(i+M)-i}{3}\right\}\right) \\
  &= \frac13\biggl(\sum_{j=1}^{M+N} j - \sum_{i=1}^M i - \sum_{i=1}^N i\biggr)\\
  &\qquad{}
    -  4\sum_{i=1}^M \left\{\frac{\s(i)-i}{3}\right\}
    -  4\sum_{i=1}^N \left\{\frac{\s(i+M)-i}{3}\right\} \\
  &=\frac{MN}{3} 
    -  4\sum_{i=1}^M \left\{\frac{\s(i)-i}{3}\right\}
    -  4\sum_{i=1}^N \left\{\frac{\s(i+M)-i}{3}\right\} \\
  &\le \frac{MN}{3} = 3^{m+n-1}.
\end{align*}
Since the determinant of the Sylvester matrix is a sum of terms of this
form, we have proven that
\[
  \deg(\det S) \le 3^{m+n-1}.
\]
\par
We are next going to evaluate $\det S$ modulo~$3$.
To ease notation, we let
\[
  Y_n = y+y^3+y^9+\cdots+y^{3^{n-1}}
  \quad\text{and}\quad
  Y_m = y+y^3+y^9+\cdots+y^{3^{m-1}}.
\]
Then Lemma~\ref{lemma:fxyiterates}(c) says that
\begin{align*}
  F^{(n)}(x,y)&\equiv x^N-x+Y_n \pmod3,\\
  G^{(m)}(x,y)&\equiv -x^M+x+Y_m \pmod3.
\end{align*}
Working modulo~$3$, this allows us to compute
\begin{align*}
  \Resultant_x\bigl(F^{(n)}(x,y)&,G^{(m)}(x,y)\bigr)\\
  &\equiv  \Resultant_x(x^N-x+Y_n,-x^M+x+Y_m) \pmod3\\
  &\equiv - \Resultant_x(x^N-x+Y_n,x^M-x-Y_m) \pmod3.
\end{align*}
We apply Lemma~\ref{lemma:resxpnxpm} with $A=-Y_n$ and
$B=Y_m$. Letting $d=\gcd(m,n)$ and $\tau$ denote $3$-power Frobenius,
this gives
\begin{align*}
  \det(S)
  &=\Resultant_x\bigl(F^{(n)}(x,y),G^{(m)}(x,y)\bigr) \\
  &\equiv -\frac{\tau^d\circ(\tau^m-1)}{\tau^d-1}(-Y_n)
    +  \frac{\tau^d\circ(\tau^n-1)}{\tau^d-1}(Y_m)  \pmod3\\
  &\equiv \t^m(Y_n)+\t^{m-d}(Y_n)+\cdots+\t^d(Y_n) \\
  &\qquad{} + \t^n(Y_m)+\t^{n-d}(Y_m)+\cdots+\t^d(Y_m) \pmod3\\
  &\equiv Y_n^{3^m} + Y_m^{3^n}  + \textup{(lower order terms)} \pmod3 \\
  &\equiv 2y^{3^{m+n-1}}  + \textup{(lower order terms)} \pmod3.
\end{align*}
\par
We have now proven that
\[
  \deg(\det S) \le 3^{m+n-1} 
  \quad\text{and}\quad
  \det S \equiv 2y^{3^{m+n-1}}  + \textup{(l.o.t.)} \pmod3.
\]
It follows that~$\det S$ has degree exactly equal to~$3^{m+n-1}$
and that its leading coefficient is relatively prime to~$3$,
which completes the proof of Proposition~\ref{prop:ResFnGm3int}.
\end{proof}

We now have all of the tools needed to prove
Theorem~$\ref{thm:cubicrigid}$.

\begin{proof}[Proof of Theorem~$\ref{thm:cubicrigid}$]
Let $(\a,\b)$ be a solution to
\[
  F^{(n)}(x,y)=G^{(m)}(x,y)=0.
\]
To ease notation, let
\[
  R^{(n,m)}(y)=\Resultant_x\bigl(F^{(n)}(x,y),G^{(m)}(x,y)\bigr).
\]
A standard property of the resultant of two polynomials says that it
is in the ideal generated by those
polynomials~\cite[2.13(c)]{MR2316407}. Thus there are
polynomials $U(x,y),V(x,y)\in\ZZ[x,y]$ such that
\[
  U(x,y)F^{(n)}(x,y)+V(x,y)G^{(m)}(x,y) 
    = R^{(n,m)}(y).
\]
Substituting~$(x,y)=(\a,\b)$, we find that $R^{(n,m)}(\b)=0$.
Proposition~\ref{prop:ResFnGm3int} says that $R^{(n,m)}(y)\in\ZZ[y]$
has leading coefficient prime to~$3$, which proves that~$\b$ is
$3$-adically integral. We next use Lemma~\ref{lemma:fxyiterates}(a) to
write
\[
  F^{(n)}(x,y)
  = f_{x,y}^n(x)-x = (-2)^{3^{n-1}}x^{3^n} + \sum_{k=1}^{3^n} a_k(y)x^{3^n-k} - x.
\]
Substituting $y=\b$ we see that~$\a$ is a root of the
polynomial $F^{(n)}(x,\b)$ whose coefficients are $3$-adically integral
and whose leading coefficient is a $3$-adic unit. Hence~$\a$ is also
$3$-adically integral.
\par
Now consider the value~$J(\a,\b)$ of the Jacobian
determinant~\eqref{eqn:Jxy}. Proposition~\ref{prop:Jxy=1mod3}
says that there is a polynomial $K(x,y)\in\ZZ[x,y]$ satisfying
\[
  J(x,y) = 1 + 3K(x,y).
\]
We know that~$\a$ and~$\b$ are $3$-adically integral, so the same
is true of~$J(\a,\b)$ and~$K(\a,\b)$. Taking norms down to~$\QQ$,
we find that
\[
  \Norm_{\QQ(\a,\b)/\QQ}  J(\a,\b)=\Norm_{\QQ(\a,\b)/\QQ} \bigl(1+3K(\a,\b)\bigr)
  \equiv 1 \pmod3.
\]
In particular,~$J(\a,\b)\ne0$.
It follows that the ideal
\[
  \bigl(F^{(n)}(x,y),G^{(m)}(x,y),J(x,y)\bigr) \subset \CC[x,y]
\]
is the unit ideal, since if it weren't, then~$F^{(n)}$,~$G^{(n)}$,
and~$J$ would have a common root.  This completes the proof that the
curves $F^{(n)}(x,y)=0$ and $G^{(m)}(x,y)=0$ intersect transversally.
\end{proof}

\section{Preperiodic critical points --- a modest beginning}
\label{section:preperiodic}

Generalizing the notation from Theorem~\ref{thm:cubicrigid}, 
for $i,j\ge1$ we let
\begin{align*}
  C_{1,n,i} &= \left\{(f,c_1,c_2)\in\Pcal_3^\crit :
     \begin{tabular}{l}
     $f^{i+n}(c_1)=f^i(c_1)$ and \\ $f^{i-1+n}(c_1)\ne f^{i-1}(c_1)$ \\
     \end{tabular}
  \right\}, \\
  C_{2,m,j} &= \left\{(f,c_1,c_2)\in\Pcal_3^\crit :
     \begin{tabular}{l}
     $f^{j+m}(c_2)=f^j(c_2)$ and\\ $f^{j-1+m}(c_2)\ne f^{j-1}(c_2)$\\
     \end{tabular}
  \right\} .
\end{align*}
In words, $(f,c_1,c_2)\in C_{1,n,i}$ if~$c_1$ is purely preperiodic
with tail length~$i$ and cycle length dividing~$n$, and similarly for
$C_{2,m,j}$. For convenience, we let $C_{1,n,0}=C_{1,n}$ and
$C_{2,m,0}=C_{2,m}$.

Thurston's theorem implies that $C_{1,n,i}$ and $C_{2,m,j}$ intersect
transversally. We sketch a $3$-adic proof of a very special case.
The key to the proof is the following elementary identity.

\begin{lemma}
\label{lemma:fxyn1x}
We have
\begin{equation}
  \label{eqn:fn1xfxfactors}
  f_{x,y}^{n+1}(x) - f_{x,y}(x)
  = \bigl(f_{x,y}^n(x)-x\bigr)^2\bigl(f_{x,y}^n(x)+2x\bigr).
\end{equation}
In particular, we have
\[
  F^{(n,1)}(x,y) 
  = \frac{f_{x,y}^{n+1}(x) - f_{x,y}(x)}{\bigl(f_{x,y}^n(x)-x\bigr)^2}
  \in \ZZ[x,y],
\]
and $F^{(n,1)}(x,y)$ satisfies
\begin{align*}
  F^{(n,1)}(x,y) &\equiv F^{(n)}(x,y) \pmod3,\\
  F_x^{(n,1)}(x,y) &\equiv 1 \pmod3,\\
  F_y^{(n,1)}(x,y) &\equiv -1 \pmod3.
\end{align*}
\end{lemma}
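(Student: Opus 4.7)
The plan is to prove the factoring identity \eqref{eqn:fn1xfxfactors} by direct algebra; this reduces $F^{(n,1)}$ to an explicit polynomial in $\ZZ[x,y]$, after which all remaining assertions follow routinely from Lemma~\ref{lemma:fxyiterates}.

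First I would establish \eqref{eqn:fn1xfxfactors}. Setting $w = f_{x,y}^n(x)$, one has $f_{x,y}^{n+1}(x) = w^3 - 3x^2 w + y$ and $f_{x,y}(x) = -2x^3 + y$, so
\[
  f_{x,y}^{n+1}(x) - f_{x,y}(x) = w^3 - 3x^2 w + 2x^3.
\]
Regarded as a cubic in $w$, this polynomial vanishes to order at least two at $w = x$: both the expression itself and its $w$-derivative $3w^2 - 3x^2$ vanish at $w = x$. Hence $(w-x)^2 = (f_{x,y}^n(x)-x)^2$ divides the expression, and comparing degree and leading coefficient identifies the remaining factor as $w + 2x$. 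Dividing by $(f_{x,y}^n(x)-x)^2$ then yields the clean formula $F^{(n,1)}(x,y) = f_{x,y}^n(x) + 2x$, which visibly lies in $\ZZ[x,y]$ since $f_{x,y}^n(x)$ does by Lemma~\ref{lemma:fxyiterates}(a).

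From this formula the congruences are essentially immediate. Comparison with $F^{(n)}(x,y) = f_{x,y}^n(x) - x$ gives $F^{(n,1)} - F^{(n)} = 3x \equiv 0 \pmod 3$. For the partial derivatives I would reduce $f_{x,y}^n(x)$ modulo $3$ using Lemma~\ref{lemma:fxyiterates}(b), which replaces it by $x^{3^n} + y + y^3 + \cdots + y^{3^{n-1}}$; after differentiating, the $x^{3^n}$ term dies because $3^n \equiv 0 \pmod 3$ and all higher powers of $y$ in the sum die for the same reason, leaving only the constants contributed by the $2x$ and by the linear-in-$y$ term. The main obstacle is spotting the factoring $w^3 - 3x^2 w + 2x^3 = (w-x)^2(w+2x)$; once noticed, the lemma is a short computation. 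The underlying reason the factoring works is geometric: $x$ is a critical point of $f_{x,y}$, so whenever $f_{x,y}^n(x) = x$ the critical orbit has already closed up, which forces a double tangency of the locus $f_{x,y}^{n+1}(x) = f_{x,y}(x)$ along $C_{1,n}$ and thus a squared factor $(f_{x,y}^n(x)-x)^2$ in the difference.
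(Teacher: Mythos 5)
Your proposal follows essentially the same route as the paper: both derive the identity $f_{x,y}(z)-f_{x,y}(x)=(z-x)^2(z+2x)$ from the fact that $z=x$ is a critical point (you phrase this as a double root of the cubic $w^3-3x^2w+2x^3$ at $w=x$, which is the same computation), substitute $z=f_{x,y}^n(x)$, and read off $F^{(n,1)}=f_{x,y}^n(x)+2x$ together with $F^{(n,1)}-F^{(n)}=3x\equiv0\pmod3$. One point you should not leave implicit, because it is exactly where the stated lemma is off: evaluating the constants in your final step gives $F^{(n,1)}\equiv x^{3^n}+2x+y+y^3+\cdots+y^{3^{n-1}}\pmod3$, hence $F_x^{(n,1)}\equiv 2\equiv-1\pmod3$ and $F_y^{(n,1)}\equiv 1\pmod3$ --- the transpose of the signs in the lemma as stated. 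Those stated signs appear to be a typo: the paper's own proof (differentiating $F^{(n)}\bmod 3$ from Lemma~\ref{lemma:fxyiterates}(c)) produces the same values $-1$ and $+1$, and these are the values actually used in the first column of the Jacobian matrix $\left(\begin{smallmatrix}-1&1\\1&1\end{smallmatrix}\right)$ in the theorem that follows; either sign convention yields a unit determinant mod $3$, so nothing downstream is affected. Your argument is therefore correct and complete once you write out the constants and record the congruences as $F_x^{(n,1)}\equiv-1$ and $F_y^{(n,1)}\equiv1\pmod3$.
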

\begin{proof}
The polynomial~$f_{x,y}(z)$ has a critical point at~$z=x$, so
the difference $f_{x,y}(z)-f_{x,y}(x)$ should be divisible by~$(z-x)^2$.
Explicitly, we find that
\[
  f_{x,y}(z)-f_{x,y}(x) = (z-x)^2(z+2x).
\]
Substituting $z=f_{x,y}^n(x)$ gives~\eqref{eqn:fn1xfxfactors}.
Then the function we have called $F^{(n,1)}(x,y)$ is given by
\[
  F^{(n,1)}(x,y) = f_{x,y}^n(x)+2x.
\]
Reducing modulo~$3$ gives
\[
  F^{(n,1)}(x,y) = f_{x,y}^n(x)+2x \equiv f_{x,y}^n(x)-x = F^{(n)}(x,y) \pmod3.
\]
The formulas for the partial derivatives of $F^{(n,1)}(x,y)\bmod3$
then follow by differentiating the formula for~$F^{(n)}(x,y)$ given in
Lemma~\ref{lemma:fxyiterates}(c).
\end{proof}

\begin{theorem}
\begin{parts}
\Part{(a)}
$C_{1,n,1}$ and $C_{2,m,0}$ intersect transversally.
\Part{(a)}
$C_{1,n,1}$ and $C_{2,m,1}$ intersect transversally.
\end{parts}
\end{theorem}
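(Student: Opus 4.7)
The plan is to mirror the proof of Theorem~\ref{thm:cubicrigid} step by step, replacing $F^{(n)}$ by $F^{(n,1)}(x,y)=f_{x,y}^n(x)+2x$ (as given by Lemma~\ref{lemma:fxyn1x}) in part~(a), and additionally replacing $G^{(m)}$ by the analog $G^{(m,1)}(x,y)=f_{x,y}^m(-x)-2x$ in part~(b). The analog of Lemma~\ref{lemma:fxyn1x} for $G^{(m,1)}$ comes from the factorization
\[
  f_{x,y}(z)-f_{x,y}(-x)=(z+x)^2(z-2x),
\]
which gives, upon substituting $z=f_{x,y}^m(-x)$,
\[
  f_{x,y}^{m+1}(-x)-f_{x,y}(-x)=\bigl(f_{x,y}^m(-x)+x\bigr)^2\bigl(f_{x,y}^m(-x)-2x\bigr),
\]
so that the critical point $c_2=-x$ has tail length exactly~$1$ and cycle dividing~$m$ precisely when $G^{(m,1)}=0$. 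The crucial observation is that $F^{(n,1)}-F^{(n)}=3x$ and $G^{(m,1)}-G^{(m)}=-3x$, so these preperiodic curves agree modulo~$3$ with their purely periodic counterparts, and hence the entire mod-$3$ machinery of Section~2 applies verbatim.

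First I would show that $R_1(y)=\Resultant_x(F^{(n,1)},G^{(m)})$ (for part~(a)) and $R_2(y)=\Resultant_x(F^{(n,1)},G^{(m,1)})$ (for part~(b)) are polynomials in $\ZZ[y]$ of degree exactly $3^{m+n-1}$ with leading coefficient prime to~$3$. The degree bound from Proposition~\ref{prop:ResFnGm3int} transfers because passing from $F^{(n)}$ to $F^{(n,1)}$ changes only the $x$-coefficient $A_{3^n-1}$, from $a_{3^n-1}(y)-1$ to $a_{3^n-1}(y)+2$, which has the same $y$-degree; the analogous statement holds for~$G^{(m,1)}$. Reducing modulo~$3$ then gives $R_1\equiv R_2\equiv \Resultant_x(F^{(n)},G^{(m)})\pmod 3$, so the leading term $2y^{3^{m+n-1}}$ from Proposition~\ref{prop:ResFnGm3int} persists. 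It follows exactly as in the proof of Theorem~\ref{thm:cubicrigid} that any common zero $(\alpha,\beta)$ is $3$-adically integral, using that the leading coefficient of $F^{(n,1)}$ in $x$ is still $(-2)^{3^{n-1}}$, a $3$-adic unit.

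For the Jacobian, since $F^{(n,1)}\equiv F^{(n)}$ and $G^{(m,1)}\equiv G^{(m)}\pmod 3$, the relevant partial derivatives all reduce modulo~$3$ to the same values used in the proof of Proposition~\ref{prop:Jxy=1mod3}. Hence both Jacobians $J_1$ and $J_2$ satisfy $J_i\not\equiv 0\pmod 3$, and the norm argument at the end of the proof of Theorem~\ref{thm:cubicrigid} then forces $J_i(\alpha,\beta)\ne 0$, giving the desired transversality.

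There is no substantive new obstacle beyond setting up the analog of Lemma~\ref{lemma:fxyn1x} for $G^{(m,1)}$. The only real bookkeeping is verifying that the constant perturbations $\pm 3x$ at the coefficients $A_{3^n-1}$ and $B_{3^m-1}$ do not spoil the degree estimate used in Proposition~\ref{prop:ResFnGm3int}, which is immediate because these perturbations are constants in~$y$. Once these small modifications are in place, the proof is a direct translation of the proof of Theorem~\ref{thm:cubicrigid}.
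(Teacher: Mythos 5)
Your proposal is correct and follows essentially the same route as the paper: replace $F^{(n)}$ by $F^{(n,1)}=f_{x,y}^n(x)+2x$ (and $G^{(m)}$ by $G^{(m,1)}=f_{x,y}^m(-x)-2x$), note that these differ from the periodic versions by $\pm 3x$ so all the mod-$3$ computations carry over, and rerun the resultant and Jacobian arguments. In fact you supply slightly more detail than the paper does on the $3$-integrality step, which the paper dispatches with a ``mutatis mutandis'' appeal to its earlier resultant argument; your observation that only the constant coefficient $A_{3^n-1}$ changes is exactly the verification that appeal requires.
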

\begin{proof}
As usual, let
\[
  f_{x,y}(z)= z^3 - 3x^2z + y
\]
be a cubic polynomial normalized to have critical points $\pm x$.
Then the points $(f_{x,y},x,-x)$ in~$C_{1,n,1}$ are the points
satisfying
\[
  f_{x,y}^{n+1}(x)=f_{x,y}(x)\quad\text{and}\quad f_{x,y}^n(x)\ne x.
\]
From Lemma~\ref{lemma:fxyn1x}, these points satisfy
\[
  F^{(n,1)}(x,y) = f_{x,y}^n(x)+2x = 0,
\]
so the locus $F^{(n,1)}(x,y)=0$ contains the curve~$C_{1,n,1}$.
We will show that the curves
\[
  F^{(n,1)}(x,y)=0\qquad\text{and}\qquad G^{(m)}(x,y)=0
\]
intersect transversally.
\par
The first part of the proof is to show that the intersection points
are $3$-adically integral. This can be proven using the resultant
methods, \emph{mutatis mutandis}, of this paper. It is also proven in
a more general setting by Epstein~\cite{arxiv1010.2780}.  We then
compute the Jacobian using the congruences for the
derivatives~$F_x^{(n,1)}(x,y)$ and~$F_y^{(n,1)}(x,y)$ given in
Lemma~\ref{lemma:fxyn1x} and differentiating the formula for
$G^{(m)}(x,y)$ given in Lemma~\ref{lemma:fxyiterates}(c). Thus
\begin{align*}
  J(x,y)
  &=   \det\begin{pmatrix} 
    F^{(n,1)}_x(x,y) & G^{(m)}_x(x,y) \\
    F^{(n,1)}_y(x,y) & G^{(m)}_y(x,y) \\
   \end{pmatrix} \pmod3 \\
  &\equiv \det\begin{pmatrix} 
      -1 & 1 \\ 1 & 1 \\
   \end{pmatrix}
  \equiv 1 \pmod 3.
\end{align*}
\par
The proof for $C_{1,n,1}$ and $C_{2,m,1}$ is almost identical, since
replacing~$x$ by~$-x$ in Lemma~\ref{lemma:fxyn1x} gives
\[
  G^{(m,1)}_x(x,y) \equiv 1\pmod3
  \qquad\text{and}\qquad
  G^{(m,1)}_y(x,y) \equiv 1\pmod3.
\]
\end{proof}

\section{Proof of \eqref{eqn:floorineq}}
\label{section:elemineq}
For the convenience of the reader, we prove the elementary
inequality~\eqref{eqn:floorineq} used in the proof of
Lemma~\ref{lemma:fxyiterates}.
For $t\in\RR$, write $t=\lfloor t\rfloor+\{t\}$, where $0\le\{t\}<1$
is the fractional part of~$t$. Then~\eqref{eqn:floorineq} is equivalent to
the inequality
\[
  \{t_1+t_2+t_3\} \le \{t_1\}+\{t_2\}+\{t_3\}.
\]
This inequality is invariant under $t_i\to t_i+k$ for any $k\in\ZZ$,
so without loss of generality, we may assume that $0\le t_i<1$ for
all $1\le i\le 3$. Then the desired inequality is
\[
  \{t_1+t_2+t_3\} \le t_1+t_2+t_3,
\]
which is trivially true. (There is nothing special about a sum of
three terms. The same proof shows that $\sum\lfloor t_i\rfloor\le
\lfloor\sum t_i\rfloor$.)

\begin{acknowledgement}
I would like to thank Adam Epstein for suggesting generalizing
Gleason's $2$-adic proof to prove other cases of Thurston's theorem,
and Adam Epstein and Bjorn Poonen for ongoing discussions of related
matters. I would also like to thank Xander Faber for organizing and
the CRM for funding the May, 2010 workshop on ``Moduli Spaces and the
Arithmetic of Dynamical Systems'' at the Bellairs Research Institute in
Barbados, where these discussions began.
\end{acknowledgement}





\def\cprime{$'$}

\end{document}